% !TEX TS-program = pdflatex
% !TEX encoding = UTF-8 Unicode

% This is a simple template for a LaTeX document using the "article" class.
% See "book", "report", "letter" for other types of document.

\documentclass[11pt]{article} % use larger type; default would be 10pt

\usepackage[utf8]{inputenc} % set input encoding (not needed with XeLaTeX)

%%% Examples of Article customizations
% These packages are optional, depending whether you want the features they provide.
% See the LaTeX Companion or other references for full information.

%%% PAGE DIMENSIONS
\usepackage{geometry} % to change the page dimensions
\geometry{a4paper} % or letterpaper (US) or a5paper or...
% \geometry{margin=2in} % for example, change the margins to 2 inches all round
% \geometry{landscape} % set up the page for landscape
%   read geometry.pdf for detailed page layout information

\usepackage{graphicx} % support the \includegraphics command and options

%%% PACKAGES
\usepackage{array} % for better arrays (eg matrices) in maths
\usepackage{paralist} % very flexible & customisable lists (eg. enumerate/itemize, etc.)
\usepackage{verbatim} % adds environment for commenting out blocks of text & for better verbatim
\usepackage{subfig} % make it possible to include more than one captioned figure/table in a single float
% These packages are all incorporated in the memoir class to one degree or another...
\usepackage{amsthm}
\usepackage{amsmath}
\usepackage{amsfonts}
\usepackage{amssymb}
\usepackage{cite}
\usepackage{cleveref}

% Get rid of indentations
\setlength{\parindent}{0pt}

\newtheorem*{theorem*}{Theorem}
\newtheorem{theorem}{Theorem}[section]
\newtheorem{lemma}{Lemma}[section]
\newtheorem{corollary}{Corollary}[section]

%%% END Article customizations

%%% The "real" document content comes below...

\title{On certain tilting modules for $SL_2$}
\author{Samuel Martin}
%\date{} 
         % otherwise the current date is printed 
  
\begin{document}
\maketitle
       
\begin{abstract}
	We give a complete picture of when the tensor product of an induced module and a Weyl module is a tilting module for the algebraic group $SL_2$ over an algebraically closed field of characteristic $p$. Whilst the result is recursive by nature, we give an explicit statement in terms of the $p$-adic expansions of the highest weight of each module.
\end{abstract}

\section{Introduction}

In this article we investigate the tensor product $\nabla(r)\otimes\Delta(s)$ for the group $G = SL_2(k)$, where $k$ is an algebraically closed field of characteristic $p>0$. Similar tensor products for $SL_2(k)$ have been studied before, in particular the product $L(r)\otimes L(s)$ by Doty and Henke in 2005\cite{Doty-Henke}. Motivated by their results utilising tilting modules, this paper describes exactly when the product $\nabla(r)\otimes\Delta(s)$ is a tilting module.
\\
\\
By an argument of Donkin given in \cite[Lemma~3.3]{Parker}, it's known already that when $|r - s | \leq 1$ the module $\nabla(r)\otimes\Delta(s)$ is tilting. Some special cases are also known, for example the tensor product of Steinberg modules $\nabla(p^n - 1)\otimes\Delta(p^m - 1)$ is tilting, since $\nabla(p^k - 1) = \Delta(p^k - 1)$ for all $k \in \mathbb{N}$, as is the tensor product $\nabla(a)\otimes\Delta(b)$ for $a,b \in \{0,\ldots, p- 1\}$.

\subsection{Terminology}
Before beginning, we will fix some terminology. Throughout, $k$ will be an algebraically closed field of characteristic $p > 0$, and $G$ will be the affine algebraic group $SL_2(k)$. Pick the Borel subgroup $B$ consisting of lower triangular matrices and containing the maximal torus $T$ of diagonal matrices. Let $X(T)$ be the weight lattice, which we associate with $\mathbb{Z}$ in the usual manner. Under this association the set of dominant weights $X^{+}$ corresponds to the set $\mathbb{N}\cup\{0\}$.
\\
\\
Whenever we refer to a module, we will always mean a rational $kG$-module, where $kG$ is the group algebra. Let $F:G\longrightarrow G$ denote the usual Frobenius morphism, and denote by $G_1$ its kernel. For any module $V$, we will denote by $V^F$ the Frobenius twist of $V$.
\\
\\
Let $k_r$ be the one dimensional $B$ module on which $T$ acts via $r \in \mathbb{Z}$, and let $\nabla(r)$ be the induced module $\text{Ind}_B^G(k_r)$. Then $\nabla(r)$ is finite dimensional and is non-zero only when $r$ is dominant, i.e. $r \geq 0$. It is well known that $\nabla(r) = S^rE$, the $r^{\text{th}}$ symmetric power of the natural module $E$. Let $\Delta(r)$ be the Weyl module given by $\Delta(r) = \nabla(r)^{*}$. By a tilting module we mean a module which has both a $\nabla$-filtration (or good filtration) and a $\Delta$-filtration (or Weyl filtration) as defined in \cite{Donkin}. We will denote by $T(r)$ the unique indecomposable tilting module of highest weight $r \in X^{+}$.
\\
\\
We will make use of the character $\text{Ch}(V)$ of a module $V$. This is given by
$$\text{Ch}(V) = \sum_{r \in X(T)}(\text{dim}V^r)x^r$$
inside the ring $\mathbb{Z}[x, x^{-1}]$ of Laurent polynomials, where $V^r$ is the $r$ weight space of $V$. We will write $\chi(r)$ for $\text{Ch}(\nabla(r)) = \text{Ch}(\Delta(r))$ and note that $\chi(1) = x + x^{-1}$. From the action of the Weyl group on each weight space, we have in fact that $\text{Ch}(V) \in \mathbb{Z}[\chi(1)]$, which is a unique factorization domain. 
\\
\\
The objects of interest in this article are the modules $\nabla(r)\otimes\Delta(s)$, for dominant weights $r$ and $s$. The character of these modules is given by the well known Clebsch-Gordan formula (assuming $r \geq s$)

$$ \text{Ch}(\nabla(r)\otimes\Delta(s)) = \chi(r)\chi(s) = \sum_{i=0}^s \chi(r + s - 2i).$$

Furthermore, for $r \geq s-1$, the module $\nabla(r)\otimes\Delta(s)$ has a $\nabla$-filtration (see \cite[Lemma~3.3]{Parker}) with sections given by those weights occurring in the character.

\subsection{Tilting Modules}
We will now give several useful results which will be used in the proceeding sections. In particular we will make extensive use of the following well known result, for which we have outlined a proof for the reader's convenience.
\begin{theorem}\label{Martin}
	There exists a short exact sequence given by
	$$ 0 \longrightarrow \nabla(r-1) \longrightarrow \nabla(r)\otimes E \longrightarrow \nabla(r+1) \longrightarrow 0,$$
and this is split if and only if $p$ does not divide $r+1$.
\end{theorem}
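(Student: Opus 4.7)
The plan is to realise the sequence concretely via the multiplication map on symmetric powers, and then detect (non-)splitting by a single equivariance calculation at the highest weight vector.

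For existence, I identify $\nabla(m) = S^m E$ with the basis of monomials $x^a y^b$ ($a+b = m$) induced from a basis $x,y$ of $E$, and take $\mu : \nabla(r) \otimes E \to \nabla(r+1)$ to be the $G$-equivariant multiplication map, which is surjective onto $\nabla(r+1) = S^{r+1} E$. By the Clebsch--Gordan character formula,
$$\text{Ch}(\ker \mu) = \chi(r)\chi(1) - \chi(r+1) = \chi(r-1),$$
so $\dim \ker \mu = r$. To identify $\ker \mu$ with $\nabla(r-1)$ as a $G$-module, I exploit the $SL_2$-invariant symplectic form $\omega = x \otimes y - y \otimes x \in E \otimes E$ and define
$$\iota : S^{r-1}E \longrightarrow S^r E \otimes E, \qquad x^a y^b \longmapsto x^{a+1} y^b \otimes y - x^a y^{b+1} \otimes x.$$
Since $\omega$ is $G$-invariant, $\iota$ is $G$-equivariant; it is manifestly injective on the monomial basis, and $\mu \circ \iota = 0$ by direct calculation. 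The dimension count then forces $\iota$ to be an isomorphism onto $\ker \mu$, giving the short exact sequence.

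For the splitting statement, suppose $\sigma : \nabla(r+1) \to \nabla(r) \otimes E$ is a section of $\mu$. The weight $r+1$ subspace of $\nabla(r) \otimes E$ is one-dimensional, spanned by $x^r \otimes x$, and the condition $\mu \circ \sigma = \mathrm{id}$ forces $\sigma(x^{r+1}) = x^r \otimes x$. Applying the divided power $F^{(1)}$ from the hyperalgebra (acting as $F^{(1)}(x^a y^b) = a\, x^{a-1}y^{b+1}$) and using $G$-equivariance gives
$$\sigma\bigl((r+1)\, x^r y\bigr) \;=\; r \, x^{r-1}y \otimes x + x^r \otimes y.$$
If $p \mid r+1$, the left-hand side vanishes while the right-hand side is nonzero (since $r \equiv -1 \not\equiv 0 \bmod p$), a contradiction. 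If $p \nmid r+1$, then iterating the same calculation with $F^{(j)}$ determines the closed formula
$$\sigma(x^j y^{r+1-j}) \;=\; \tfrac{1}{r+1}\bigl( j \, x^{j-1} y^{r+1-j} \otimes x + (r+1-j)\, x^j y^{r-j} \otimes y \bigr),$$
which is well defined because $p \nmid r+1$; one then checks directly that $\sigma$ is $G$-equivariant and that $\mu \circ \sigma = \mathrm{id}$.

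The main obstacle will be identifying $\ker \mu$ with $\nabla(r-1)$ as a $G$-module, since dimension and character alone do not suffice in positive characteristic; the symplectic invariant $\omega$ handles this cleanly. Once the sequence is in place, the splitting analysis collapses to the single equivariance relation at the highest weight vector, which automatically isolates the divisibility condition on $r+1$.
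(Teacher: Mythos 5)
Your proof is correct, but it takes a genuinely different route from the paper. The paper gets existence from the good filtration of $\nabla(r)\otimes\Delta(1)$, proves splitting for $p\nmid r+1$ by the block/linkage decomposition (the weights $r-1$ and $r+1$ lie in the same block precisely when $p\mid r+1$), and rules out splitting for $p\mid r+1$ by noting that $E\otimes\nabla(r)$ is projective as a $G_1$-module while $\nabla(r\pm1)$ are not. You instead build everything explicitly inside symmetric powers: the kernel of the multiplication map is identified with $S^{r-1}E$ via the invariant symplectic element $\omega$, non-splitting is detected by a single hyperalgebra computation at the highest weight vector, and the splitting for $p\nmid r+1$ is the normalized polarization (comultiplication) map $\tfrac{1}{r+1}\Delta$, whose equivariance follows from $G$ acting by algebra automorphisms on $S(E)$. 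Your version is elementary and self-contained and actually exhibits the section, whereas the paper's argument is shorter given standard machinery and reuses concepts ($G_1$-projectivity, blocks) that recur later in the paper. One small caveat: your claim that iterating $F^{(j)}$ \emph{determines} the closed formula for $\sigma$ is not literally right, since $\binom{r+1}{j}$ can vanish mod $p$ even when $p\nmid r+1$ (e.g.\ $p=2$, $r+1=5$, $j=2$), so the divided powers need not pin down $\sigma$ on every weight space; but this is only motivational, and your actual argument --- write down the formula and verify directly that it is equivariant and splits $\mu$ --- is sound as stated.
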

\begin{proof}
	That the sequence exists is clear by considering the $\nabla$-filtration of $\nabla(r)\otimes E = \nabla(r)\otimes\Delta(1)$. If $p$ does not divide $r+1$, the result follows by considering the blocks (see \cite[II.7.1]{Jantzen}) for $SL_2(k)$. On the other hand, if $p$ does divide $r+1$, then the module $E\otimes\nabla(r)$ is projective as a $G_1$-module, while neither $\nabla(r-1)$ nor $\nabla(r+1)$ are, so the sequence cannot be split.
\end{proof}

The next result extends  \cite[Lemma~3.3]{Parker}.

\begin{lemma}\label{tensorE}
	
If $r, s \in \{np - 1,\ np,\ np + 1, \ldots,\ (n+1)p - 1\}$ for some fixed $n \in \mathbb{N}$, then $\nabla(r)\otimes\Delta(s)$ is tilting.

\end{lemma}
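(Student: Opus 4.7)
The plan is to prove the lemma by induction on $r+s$, with $r,s$ in the interval $I_n = \{np-1, np, \ldots, (n+1)p-1\}$. Since $(\nabla(r)\otimes\Delta(s))^{*} \cong \nabla(s)\otimes\Delta(r)$ and tilting is preserved under duality, I may assume throughout that $r \geq s$. The base case $|r-s|\leq 1$ is the result recalled from \cite[Lemma~3.3]{Parker} in the introduction, and the case $n=0$ is the elementary observation mentioned there as well, so I assume $n \geq 1$ in what follows.

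For the inductive step with $r-s\geq 2$, I split according to where $s$ sits in $I_n$. In the interior case $s \geq np+1$, I would dualise Theorem~\ref{Martin} (using $E^{*} \cong E$) to obtain
\[
0 \longrightarrow \Delta(s) \longrightarrow \Delta(s-1)\otimes E \longrightarrow \Delta(s-2) \longrightarrow 0,
\]
and tensor with $\nabla(r)$. Both $s-1$ and $s-2$ lie in $I_n$ and give a smaller sum, so by induction $\nabla(r)\otimes\Delta(s-1)$ and $\nabla(r)\otimes\Delta(s-2)$ are tilting; hence the middle term of the tensored sequence is tilting (as $E$ is tilting). Since $r\geq s\geq s-1$, the left-hand term $\nabla(r)\otimes\Delta(s)$ has a $\nabla$-filtration by the fact noted just before this subsection; the right-hand term, being tilting, has a $\Delta$-filtration. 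Thus $\mathrm{Ext}^{1}(\nabla(r)\otimes\Delta(s-2),\,\nabla(r)\otimes\Delta(s)) = 0$, the sequence splits, and $\nabla(r)\otimes\Delta(s)$ is a direct summand of a tilting module, hence itself tilting.

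In the boundary case $s\in\{np-1,\,np\}$, the previous recursion fails because $s-2$ leaves $I_n$. The remedy is to switch to the $\nabla$-side splitting. Since $r\geq np+1$ and $np$ is the only multiple of $p$ in $I_n$, we have $p\nmid r$; applying Theorem~\ref{Martin} with $r$ replaced by $r-1$ therefore yields the split isomorphism
\[
\nabla(r-1)\otimes E \;\cong\; \nabla(r-2)\oplus\nabla(r).
\]
By induction $\nabla(r-1)\otimes\Delta(s)$ is tilting, and so is $\nabla(r-1)\otimes\Delta(s)\otimes E$. But tensoring the displayed isomorphism by $\Delta(s)$ gives
\[
\nabla(r-1)\otimes\Delta(s)\otimes E \;\cong\; \nabla(r-2)\otimes\Delta(s) \ \oplus\ \nabla(r)\otimes\Delta(s),
\]
exhibiting $\nabla(r)\otimes\Delta(s)$ as a direct summand of a tilting module, hence tilting.

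The main obstacle is the boundary case: one is forced to abandon the $\Delta$-side short exact sequence and use instead the splitting clause of Theorem~\ref{Martin}, which is available precisely because the interior of $I_n$ avoids multiples of $p$. The two subcases together then close the induction.
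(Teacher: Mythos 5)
Your proof is correct and follows essentially the same strategy as the paper: start from the diagonal cases $|r-s|\leq 1$ supplied by \cite[Lemma~3.3]{Parker} and propagate outward by tensoring with $E$ and extracting direct summands via the splitting in Theorem~\ref{Martin}. The only cosmetic difference is that in your interior case you invoke an $\mathrm{Ext}^{1}$-vanishing argument to split the sequence, whereas the splitting clause of Theorem~\ref{Martin} already applies there directly, since $p\nmid s$ for $s\in\{np+1,\ldots,(n+1)p-1\}$.
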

\begin{proof}

First we note that we can apply \Cref{Martin} equally well to $E\otimes\Delta(r)$ since $(E\otimes\nabla(r))^* = E\otimes\Delta(r)$. Now we consider $\nabla(np + 1)\otimes E \otimes\Delta(np)$. Since the tensor product of tilting modules is also tilting (this follows from \cite[Theorem~1]{Mathieu}), this is a tilting module. Furthermore $p$ does not divide $np + 1$, so using the above result we obtain 
$$\nabla(np + 1)\otimes (E\otimes\Delta(np)) = (\nabla(np + 1)\otimes\Delta(np - 1)) \oplus (\nabla(np + 1)\otimes\Delta(np + 1)). $$

Since the whole module is tilting, each summand on the right hand side is tilting, in particular the module $\nabla(np + 1)\otimes\Delta(np - 1)$. We can continue to propagate in this manner, tensoring $E$ with both $\nabla(t)$ and $\Delta(t)$ for $t \in \{np,\ np + 1, \ldots,\ (n+1)p - 2\}$, until we reach $np-1$ and $(n+1)p - 1$, for which we can no longer apply the above result.

\end{proof}

This result shows us that there are more tilting modules of the form $\nabla(r)\otimes\Delta(s)$ than those given in \cite[Lemma~3.3]{Parker} for every characteristic $p$. Before we delve further into this investigation, we prove a couple of useful lemmas concerning tilting modules. For the following lemma, $G$ may be an arbitrary semisimple, simply connected algebraic group, over an algebraically closed field of prime characteristic. We will denote by $(\ ,\ )$ the usual positive definite symmetric bilinear form on the Euclidean space in which the root system of $G$ lies.
\begin{lemma}\label{tiltingTwist}
	Let $T_1$ and $T_2$ be tilting modules where $T_1$ is projective as a $G_1$-module, then the tensor product $T_1 \otimes T_2^F$ is also a tilting module.
\end{lemma}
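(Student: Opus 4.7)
The plan is to show that $T_1 \otimes T_2^F$ admits both a $\nabla$-filtration and a $\Delta$-filtration. For the $\Delta$-case I would appeal to duality: the dual of a tilting module is tilting, and $G_1$-projectivity coincides with $G_1$-injectivity for the finite group scheme $G_1$, so $T_1^{*}$ and $T_2^{*}$ satisfy the same hypotheses as $T_1, T_2$; it therefore suffices to construct a good filtration on $T_1 \otimes T_2^F$.

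First I would pin down the shape of a good filtration of $T_1$. Since the only simple $G_1$-projective module is the Steinberg module $St = L((p-1)\rho)$, every composition factor $L(\lambda)$ of the $G_1$-projective module $T_1$ satisfies $\lambda = (p-1)\rho + p\mu$ for some dominant $\mu$ (via Steinberg's tensor product theorem applied to the restricted part of $\lambda$). Because the multiplicity of $\nabla(\lambda)$ as a section in any good filtration of $T_1$ equals $\text{dim}\,\text{Hom}_G(\Delta(\lambda), T_1)$, which vanishes unless $L(\lambda)$ is a composition factor of $T_1$, the sections of such a filtration must all have the Steinberg-twisted form $\nabla((p-1)\rho + p\mu_i)$ for various dominant $\mu_i$.

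Next, I would invoke the standard identification
\[
\nabla((p-1)\rho + p\mu) \;\cong\; St \otimes \nabla(\mu)^F,
\]
which one can verify by comparing characters and noting that both sides share the simple head $L((p-1)\rho + p\mu)$. Tensoring the above filtration of $T_1$ with $T_2^F$ then produces a filtration of $T_1 \otimes T_2^F$ whose sections are
\[
St \otimes \nabla(\mu_i)^F \otimes T_2^F \;\cong\; St \otimes (\nabla(\mu_i) \otimes T_2)^F.
\]
By Mathieu's theorem (already invoked in \Cref{tensorE}) each $\nabla(\mu_i) \otimes T_2$ carries a good filtration, say with sections $\nabla(\lambda_{ij})$. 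Refining yields a filtration of $T_1 \otimes T_2^F$ whose sections $St \otimes \nabla(\lambda_{ij})^F \cong \nabla((p-1)\rho + p\lambda_{ij})$ together constitute a good filtration, completing the $\nabla$-direction.

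The main obstacle I expect is justifying the Steinberg identification $\nabla((p-1)\rho + p\mu) \cong St \otimes \nabla(\mu)^F$ cleanly, since the analogous tensor decomposition fails for arbitrary restricted weights in place of $(p-1)\rho$; it is precisely the fact that the Steinberg weight is both restricted and self-dual under the relevant symmetries that makes the character and head arguments align. Past that step the proof is essentially bookkeeping: combining Mathieu's closure theorem for good filtrations with a Steinberg-twist refinement of the filtration coming from the $G_1$-projectivity of $T_1$.
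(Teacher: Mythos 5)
Your reduction to the $\nabla$-direction via duality matches the paper, and the concluding bookkeeping with Mathieu's theorem would be fine, but the structural claim you build everything on is false: it is not true that every composition factor (equivalently, every section of a good filtration) of a $G_1$-projective module has highest weight of the form $(p-1)\rho + p\mu$. Projectivity over $G_1$ does not pass to composition factors --- the projective indecomposable $G_1$-modules other than the Steinberg module itself have many non-projective composition factors. Concretely, for $G = SL_2$ and $p=2$ the module $T(2) = E\otimes E = \text{St}\otimes\text{St}$ is tilting and projective as a $G_1$-module, yet its good filtration has sections $\nabla(2)$ and $\nabla(0)$, and $\nabla(0)$ is not of the form $\nabla((p-1)\rho + p\mu) = \nabla(1+2\mu)$; likewise $L(0)$ is a composition factor of $T(2)$ but is certainly not $G_1$-projective. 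So the filtration of $T_1$ by Steinberg-twisted sections that you propose to tensor with $T_2^F$ does not exist in general, and the argument collapses at its first step.

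The paper circumvents exactly this point: rather than filtering $T_1$ itself, it uses Donkin's classification of the $G_1$-projective indecomposable tilting modules as the $T(\lambda)$ with $(\lambda,\check{\alpha})\geq p-1$ for all simple $\alpha$, realises each such $T(\lambda)$ as a \emph{direct summand} of $\text{St}\otimes T(\lambda - (p-1)\rho)$, and applies the Steinberg identity $\text{St}\otimes\nabla(\mu)^F\cong\nabla((p-1)\rho+p\mu)$ only to the honest good filtration of $\text{St}\otimes T(\lambda-(p-1)\rho)\otimes T_2^F$. The conclusion for $T(\lambda)\otimes T_2^F$ then follows because a direct summand of a module with a good filtration again has a good filtration. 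If you replace your filtration claim by this ``direct summand of $\text{St}\otimes(\text{tilting})$'' device, the remainder of your argument goes through essentially unchanged.
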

\begin{proof}

First notice that it's sufficient to prove that for any $T_1$ and $T_2$ satisfying the hypothesis, the tensor product $T_1 \otimes T_2^F$ has a $\nabla$-filtration. Then the dual module $T_1^{*}\otimes (T_2^F)^{*}$ (a tensor product of tilting modules still satisfying the hypothesis) also has a $\nabla$-filtration, or equivalently, $T_1 \otimes T_2^F$ has a $\Delta$-filtration.
\\
\\
With this in mind let $\lambda \in X^{+}$, be such that $(\lambda,\check{\alpha}) \geq p - 1$ for all simple roots $\alpha$ (where, as usual $\check{\alpha} = 2\alpha/(\alpha,\alpha)$), so that $T(\lambda)$ is projective as a $G_1$-module \cite[Proposition~2.4]{Donkin}. Let $\rho$ be the half sum of all positive roots, then since $(\lambda - (p-1)\rho, \check{\alpha}) \geq 0$ for all simple roots $\alpha$, we have that $\lambda - (p-1)\rho \in X^{+}$. Let $\text{St}$ denote the Steinberg module $T((p-1)\rho)$, then $T(\lambda)$ must be a component of the module $\text{St}\otimes T(\lambda - (p-1)\rho)$ with highest weight $\lambda$. It follows that for any tilting module $T_2$ we have that $T(\lambda)\otimes T_2^F$ is a component of $\text{St}\otimes T(\lambda - (p-1)\rho)\otimes T_2^F$.
\\
\\
As a tilting module, $T_2$ has a $\nabla$-filtration, say with sections $\nabla(\mu(i))$ for some $\mu(i) \in X^{+}$. It follows that $\text{St}\otimes T_2^F$ has a $\nabla$-filtration with sections $\nabla((p-1)\rho + p\mu(i))$ (\cite[Proposition II.3.19]{Jantzen}). Hence $T(\lambda)\otimes T_2^F$ is a direct summand of a module with a $\nabla$-filtration, and thus has a $\nabla$-filtration itself.
\\
\\
Now suppose that $T_1$ is a tilting module that is projective as a $G_1$-module. Then each indecomposable summand of $T_1$ must be $T(\lambda)$ for some $\lambda$ as above (again using \cite[Proposition~2.4]{Donkin}). Hence $T_1$ is a direct sum of modules with a $\nabla$-filtration, and thus itself has a $\nabla$-filtration.
\end{proof}

We will use this lemma throughout the article, in conjunction with the facts that\\ $\nabla(p-1) = \Delta(p-1)$ is a projective $G_1$-module \cite[Proposition~II.10.1]{Jantzen}, and that the tensor product of a projective $G_1$-module with another $G_1$-module is again projective. Next we return to the case $G = SL_2(k)$.

\begin{lemma}\label{tiltingLemma}
	Let $V$ be a tilting module, and define the module $W$ by $H^0(G_1, V) = W^F$. Then $W$ is a tilting module.
\end{lemma}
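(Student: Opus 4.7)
The plan is to reduce to the case of a single indecomposable tilting module $V = T(r)$ and then exploit the tensor product structure of indecomposable tilting modules for $SL_2$. By Krull--Schmidt, $V$ decomposes as a direct sum of modules $T(r_i)$, and since $H^0(G_1, -)$ commutes with direct sums, it suffices to prove the result for $V = T(r)$.

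For $r \geq p - 1$, I will write $r = r_0 + ps$ with $r_0 \in \{p - 1, p, \ldots, 2p - 2\}$, which is always possible and unique. Since $T(r_0)$ has highest weight at least $p - 1$ it is projective as a $G_1$-module, and the tilting analogue of the Steinberg tensor product theorem---a consequence of \Cref{tiltingTwist} together with the indecomposability of $T(r_0) \otimes T(s)^F$---yields $T(r) = T(r_0) \otimes T(s)^F$. As $T(s)^F$ is a trivial $G_1$-module,
\begin{align*}
H^0(G_1, T(r)) = H^0(G_1, T(r_0)) \otimes T(s)^F.
\end{align*}
Writing $H^0(G_1, T(r_0)) = W_0^F$ we obtain $W = W_0 \otimes T(s)$, which is tilting as soon as $W_0$ is, using that tensor products of tilting modules are tilting \cite{Mathieu}. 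This reduces the problem to computing $W_0$ for $r_0 \in \{0, 1, \ldots, 2p - 2\}$.

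For $0 \leq r_0 \leq p - 1$, $T(r_0) = L(r_0)$ is simple as a $G_1$-module, giving $W_0 = L(0)$ if $r_0 = 0$ and $W_0 = 0$ otherwise; both are tilting. For $p \leq r_0 \leq 2p - 2$, I apply $H^0(G_1, -)$ to the short exact sequence
\begin{align*}
0 \longrightarrow \nabla(2(p-1) - r_0) \longrightarrow T(r_0) \longrightarrow \nabla(r_0) \longrightarrow 0
\end{align*}
and use the explicit formula $H^0(G_1, \nabla(t)) = \nabla(t/p)^F$ when $p \mid t$ and $0$ otherwise---obtained by identifying $\nabla(t)$ with the degree-$t$ component of $k[X, Y]$ and observing that $k[X, Y]^{G_1} = k[X^p, Y^p]$. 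A short case analysis (setting $\mu = 2(p-1) - r_0 \in \{0, \ldots, p-2\}$, noting that $\nabla(\mu)^{G_1} = k$ precisely when $r_0 = 2p - 2$, while $\nabla(r_0)^{G_1}$ is nonzero precisely when $r_0 = p$) then forces $W_0$ into $\{0, L(0), L(1)\}$ (with a small adjustment when $p = 2$, where the ranges for $r_0 = p$ and $r_0 = 2p-2$ coincide), each of which is tilting.

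The main obstacle is the last step: controlling the connecting homomorphism $\nabla(r_0)^{G_1} \to H^1(G_1, \nabla(\mu))$ arising from $G_1$-cohomology. When $r_0 = p$ this is handled by observing that the socle $L(p) = L(1)^F$ of $T(p)$ is already a $G_1$-trivial submodule realising all of $\nabla(p)^{G_1}$ inside $T(p)^{G_1}$. For $r_0 \in \{p + 1, \ldots, 2p - 3\}$ both end terms in the invariants sequence vanish, so $W_0 = 0$ automatically. The case $r_0 = 2p - 2$ is settled by noting that the trivial submodule $\nabla(0) = k \subset T(2p - 2)$ already contributes the desired $G_1$-invariants, while weight considerations preclude anything further.
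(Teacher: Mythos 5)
Your overall strategy is essentially the paper's: reduce to indecomposables, factor $T(r)\cong T(r_0)\otimes T(s)^F$ with $r_0$ in the restricted range, pull the Frobenius twist out of the $G_1$-invariants, and compute $H^0(G_1,T(r_0))$ by hand. The only divergence is in that last computation: the paper uses the fact that $T(p-1+t)$ restricted to $G_1$ is the injective envelope of $L(p-1-t)$, hence has simple $G_1$-socle $L(p-1-t)$, which settles every case at once; you instead use the $\nabla$-filtration of $T(r_0)$ together with the formula for $(S^tE)^{G_1}$ (which is correct).

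However, your treatment of the case $r_0=p$ (for $p$ odd) contains a genuine error. The socle of $T(p)$ is not $L(p)$: the module $T(p)$ is uniserial with layers $L(p-2)\,|\,L(p)\,|\,L(p-2)$ from socle to head, so its socle is $L(p-2)$, and $L(p)=L(1)^F$ is only a subquotient, not a submodule. Indeed, restricted to $G_1$, $T(p)$ is the injective envelope of the nontrivial simple module $L(p-2)$, so its $G_1$-socle contains no trivial summand and $H^0(G_1,T(p))=0$ --- not $L(1)^F$ as you claim. Equivalently, the connecting map $\nabla(p)^{G_1}\to H^1(G_1,\nabla(p-2))$ is injective rather than zero. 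Your final conclusion happens to survive, because $T(p)^{G_1}$ is in any case a $G$-submodule of the simple module $L(1)^F$ and is therefore either $0$ or all of $L(1)^F$, and both $0$ and $L(1)=T(1)$ are tilting --- had you argued this way you would not have needed to control the connecting map at all. But as written, the step rests on a false identification of the socle of $T(p)$ and yields the wrong value of $W_0$, so it needs to be repaired (most cleanly by invoking the simple $G_1$-socle, as the paper does).
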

\begin{proof}
	Since each tilting module has a unique decomposition (up to isomorphism) into indecomposable tilting modules $T(m)$ with highest weight $m$, it suffices to prove this for $V = T(m)$. We can split this into three separate cases, the first of which deals with $0 \leq m \leq p-1$. For such $m$ we have $T(m) = L(m)$ and so
	$$H^0(G_1, T(m)) = \begin{cases}
								L(0),  	&	\ m = 0 \\
								\\
								0, 	& \ 1 \leq m \leq p-1.
								\end{cases}$$
Next we consider the case $m = p-1 + t$ for $1 \leq t \leq p-1$. Here $T(m)$, considered as a $G_1$-module, is the injective envelope of $L(p-1-t)$ \cite[Example~2.2.1]{Donkin}. In particular $L(p-1-t)$ is the socle of $T(p-1+t)$ so if $H^0(G_1, T(p-1+t)) \neq 0$ then $H^0(G_1, L(p-1-t)) \neq 0$. Considering the case $t = p-1$ separately we get
$$H^0(G_1, T(m)) = \begin{cases}
								L(0),  	&	\ t = p-1 \\
								\\
								0, 	& \ 1 \leq t \leq p-2.
								\end{cases}$$
For the remaining cases we will use induction by writing $m = p-1 + t + pn$ for some $n\in\mathbb{N}$ and $0 \leq t \leq p-1$ so that we can write $T(m) = T(p-1+t)\otimes T(n)^F$. Taking the $G_1$ fixed points we get $H^0(G_1, T(m)) = H^0(G_1, T(p-1+t))\otimes T(n)^F$ which by the previous case gives us
\begin{align*}
 H^0(G_1, T(m)) &= \begin{cases}
								T(n)^F,  	&	\ t = p-1 \\
								\\
								0, 	& \ 0 \leq t \leq p-2,
								\end{cases} \\
\intertext{so that}
W &= \begin{cases}
								T(n),  	&	\ t = p-1 \\
								\\
								0, 	& \ 0 \leq t \leq p-2,
								\end{cases}								
\end{align*}
and is thus tilting.
\end{proof}

\section{Main Theorem}
Before stating the main theorem of this paper, we will introduce some notation. Let $r \in \mathbb{N}$ and $p$ a prime. We can write the base $p$ expansion of $r$ as

$$ \sum_{i = 0}^{n} r_i p ^ i,$$

where each $r_i \in \{0,\ldots, p-1\}$, $r_n \neq 0$ and for all $j > n$ we have $r_j = 0$. We will say that $r$ has $p$-length $n$ (or just length $n$ if the prime is clear), and write 

$$ \text{len}_p(r) = n.$$

We define $\text{len}_p(0) = -1$. Now given any pair $(r,s) \in \mathbb{N}^2$ we can write

$$r = \sum_{i = 0}^{n} r_i p ^ i, \quad s = \sum_{i=0}^{n}s_i p ^ i$$

where $n = \text{max}\,(\text{len}_p(r), \text{len}_p(s))$ so that at least one of $r_n$ and $s_n$ is non zero. Now let $m$ be the largest integer such that $r_m \neq s_m$ and let

$$ \hat{r} = \sum_{i = 0}^{m} r_i p ^ i, \quad \hat{s} = \sum_{i=0}^{m}s_i p ^ i$$

so that if $r>s$ we have $r_m > s_m$ and $\hat{r} > \hat{s}$. Using this notation we may write

$$ r = \hat{r} + \sum_{i = m+1}^{n} r_i p^i, \quad s = \hat{s} + \sum_{i = m+1}^{n} s_i p^i = \hat{s} + \sum_{i = m+1}^{n} r_i p^i.$$

Notice in particular that $r - \hat{r} = s - \hat{s}$ and denote this number by $\varepsilon_p(r,s)$ so that $p^{m+1}$ divides $\varepsilon_p(r,s)$. We will call the pair $(\hat{r}, \hat{s})$ the primitive of $(r,s)$, and say that $(r,s)$ is a primitive pair if $(r,s) = (\hat{r}, \hat{s})$.

\begin{lemma}\label{hatsLemma}

For $r$ and $s$ as above with $(r,s) \neq (\hat{r}, \hat{s})$, write $r = pt + r_0$ and $s = pu + s_0$, then we have the following.
\begin{enumerate}
	\item $\hat{r} = p\hat{t} + r_0$ and $\hat{s} = p\hat{u} + s_0$.
	\item $\varepsilon_p(r,s) = p\,\varepsilon_p(t, u)$.
	\item Let $r \geq s$ and $s' = s-1$. Then the pair $(\hat{r}, \hat{s'})$ is equal to $(\hat{r},\hat{s} - 1)$.
\end{enumerate}
\end{lemma}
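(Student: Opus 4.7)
The plan is to reduce all three statements to manipulations of base-$p$ digits, using the fact that writing $r = pt + r_0$ and $s = pu + s_0$ with $0 \leq r_0, s_0 < p$ says exactly that the digit sequences of $t$ and $u$ are those of $r$ and $s$ shifted down one position: $t_i = r_{i+1}$ and $u_i = s_{i+1}$.

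For Part 1, the hypothesis $(r,s) \neq (\hat{r}, \hat{s})$ forces $m < n$, so $r_i = s_i$ for all $m < i \leq n$. Shifting, $t$ and $u$ agree at every position above $m-1$, so the largest index of disagreement for the pair $(t,u)$ is exactly $m-1$ (with the natural convention $\hat{t} = \hat{u} = 0$ in the degenerate case $m = 0$, where $t = u$). Then $\hat{t} = \sum_{i=0}^{m-1} r_{i+1} p^i$, so $p\hat{t} + r_0 = \sum_{i=0}^m r_i p^i = \hat{r}$, and analogously for $s$ and $u$.

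Part 2 follows at once from Part 1: $\varepsilon_p(r, s) = r - \hat{r} = (pt + r_0) - (p\hat{t} + r_0) = p(t - \hat{t}) = p\,\varepsilon_p(t, u)$.

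Part 3 is the most delicate, because subtracting $1$ from $s$ can propagate a borrow through the base-$p$ expansion. I would argue by case analysis on whether $s_0 \geq 1$. In the no-borrow case only the units digit of $s$ changes, so the topmost disagreement between $r$ and $s' = s - 1$ is still $m$ (for $m \geq 1$ this is immediate; for $m = 0$ the condition $r \geq s$ with $r_0 \neq s_0$ forces $r_0 > s_0$, ruling out $r_0 = s_0 - 1$). In the borrow case $s_0 = 0$ one locates the smallest $k \geq 1$ with $s_k \neq 0$; the digits of $s'$ are then $p-1$ for $i < k$, $s_k - 1$ at position $k$, and unchanged for $i > k$. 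A direct digit-by-digit comparison shows the topmost disagreement is still position $m$, and summing digits through position $m$ telescopes to $\hat{s} - 1$ while leaving $\hat{r}$ untouched. The main obstacle is this borrow analysis: one must carefully track which digits change and verify that the primitive cutoff $m$ is preserved, which is where the hypothesis $r \geq s$ (together with the implicit positivity of $\hat{s}$ in the intended applications) is used.
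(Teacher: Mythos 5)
Your proposal is correct and, for parts 1 and 2, follows essentially the same digit-shifting argument as the paper (your derivation of part 2 directly from part 1 via $\varepsilon_p(r,s)=r-\hat{r}=p(t-\hat{t})$ is marginally cleaner than the paper's explicit digit sum, but it is the same computation). The real divergence is in part 3. The paper argues additively: it claims $r_m>s_m$ forces $r_m>s'_m$, deduces $\varepsilon_p(r,s')=\varepsilon_p(r,s)$, and then reads off $\hat{s'}=\hat{s}-1$ from $s'=\hat{s}-1+\varepsilon_p(r,s)$. You instead track the borrow through the base-$p$ digits explicitly. Your version is the more careful one, and the caveat you flag is genuine: both arguments need the borrow not to propagate past position $m$, i.e.\ they need $\hat{s}\geq 1$, and this is not among the stated hypotheses. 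Without it the statement is simply false: take $p=2$, $(r,s)=(3,2)$, so $m=0$, $(\hat{r},\hat{s})=(1,0)$, while $s'=1$ gives the primitive pair $(\hat{r},\hat{s'})=(3,1)\neq(1,-1)$. The paper's inference ``since $s'<s$ it follows that $r_m>s'_m$'' fails exactly here ($s'_0=1=r_0$), so its proof silently assumes what you make explicit. In the intended applications $\hat{s}\geq 1$ does hold, but your proposal is right to isolate this as the crux rather than pass over it; once $\hat{s}\geq 1$ is granted, either route (your digit-by-digit comparison or the paper's $\varepsilon$-bookkeeping) closes the argument.
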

\begin{proof}
	First we note that we have
	
	$$ t = \sum_{i = 0}^{n-1}r_{i+1}p^i, \quad u = \sum_{i=0}^{n-1}s_{i+1}p^i.$$
	
	Writing $t = \hat{t} + \varepsilon_p(t,u)$ and $u = \hat{u} + \varepsilon_p(t,u)$ it's clear that since $r_i = s_i$ for all $i = m+1, m+2, \ldots, n$ we have that 
	
	$$\hat{t} = \sum_{i= 0}^{m-1} r_{i+1}p^i, \quad \hat{u} = \sum_{i= 0}^{m-1} s_{i+1}p^i.$$
	
	It's clear then that $p\hat{t} + r_0 = \hat{r}$, and $p\hat{u} + s_0 = \hat{s}$. Furthermore we have that 
	
	$$\varepsilon_p(t,u) = \sum_{i = m}^{n-1}r_{i+1}p^i = \sum_{i = m+1}^n r_i p^{i-1},$$
	
	so that $p\varepsilon_p(t,u) = \varepsilon_p(r,s)$.
	\\
	\\
	For the final statement, we first note that since $r \geq s$ we must have that $r_m > s_m$, and since $s' < s$ it follows that $r_m > s'_m$. Hence we have that $\varepsilon_p(r,s) = \varepsilon_p(r, s')$. Now $s = \hat{s} + \varepsilon_p(r,s)$, so it follows that $s' = \hat{s} - 1 + \varepsilon_p(r,s)$. On the other hand we have  
	$$s' = \hat{s'} + \varepsilon_p(r,s') =  \hat{s'} + \varepsilon_p(r,s).$$
	Hence the pair $(\hat{r}, \hat{s'})$ is equal to $(\hat{r}, \hat{s} - 1)$.
\end{proof}

This lemma will be helpful in proving the main theorem, which follows.

\begin{theorem}\label{mainTheorem}
Let the pair $(\hat{r}, \hat{s})$ be the primitive of $(r,s)$. The module $\nabla(r) \otimes \Delta(s)$ is a tilting module if and only if one of the following
\begin{enumerate}
	\item $\hat{r} = a p^n + p^n - 1$ for some $a \in \{0,\ldots, p-2\}$, $n \in \mathbb{N}$, and $\hat{s} < p^{n+1}$,
	\item $\hat{s} = b p^n + p^n - 1$ for some $b \in \{0,\ldots, p-2\}$, $n \in \mathbb{N}$, and $\hat{r} < p^{n+1}$.
\end{enumerate}
\end{theorem}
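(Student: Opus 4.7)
The plan is to prove the theorem by strong induction on $r+s$, using Theorem \ref{Martin} to shift one weight by $\pm 1$ when possible and a Steinberg-type factorisation otherwise. Throughout I use the observation that a module $M$ is tilting if and only if $M \otimes E$ is tilting, since $E \otimes E$ contains $T(0)$ as a direct summand and so $M$ is a summand of the tilting module $M \otimes E \otimes E$. The base case $s = 0$ asks when $\nabla(r)$ is tilting: since $\nabla(r)$ is indecomposable with simple socle $L(r)$, this happens if and only if $\nabla(r) = L(r)$, which for $SL_2$ occurs exactly when $r = (a+1)p^n - 1$ for some $a \in \{0, \ldots, p-1\}$ and $n \geq 0$, matching Condition 1 with $\hat s = 0 < p^{n+1}$.

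When $r_0 = s_0 = p-1$, write $r = pr' + (p-1)$ and $s = ps' + (p-1)$. Proposition~II.3.19 of Jantzen and its dual give $\nabla(r) \cong \text{St}\otimes\nabla(r')^F$ and $\Delta(s) \cong \text{St}\otimes\Delta(s')^F$, hence
\[
\nabla(r) \otimes \Delta(s) \;\cong\; (\text{St}\otimes\text{St}) \otimes \bigl(\nabla(r') \otimes \Delta(s')\bigr)^F.
\]
Since $\text{St}\otimes\text{St}$ is tilting and projective as a $G_1$-module, Lemma \ref{tiltingTwist} gives tiltingness of the left side whenever $\nabla(r')\otimes\Delta(s')$ is tilting. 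For the converse, I compute $H^0(G_1, \text{St}\otimes\text{St}) = L(0)$ from the Clebsch-Gordan decomposition of $\text{St}\otimes\text{St}$ together with the case analysis in the proof of Lemma \ref{tiltingLemma}; it then follows that $H^0(G_1, \nabla(r)\otimes\Delta(s)) \cong (\nabla(r')\otimes\Delta(s'))^F$, and Lemma \ref{tiltingLemma} forces $\nabla(r')\otimes\Delta(s')$ to be tilting. Lemma \ref{hatsLemma}(1) matches the stated conditions at level $n$ for $(\hat r, \hat s)$ to level $n-1$ for $(\hat{r'},\hat{s'})$.

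Otherwise, at least one of $r_0, s_0$ is not $p-1$, so Theorem \ref{Martin} can be applied on either the $\nabla$ side or the $\Delta$ side. Working with the $\Delta$ side when $s_0 \neq p-1$: Theorem \ref{Martin} gives $\Delta(s) \otimes E = \Delta(s-1)\oplus\Delta(s+1)$, so writing $V_s := \nabla(r)\otimes\Delta(s)$, tiltingness of $V_s$ is equivalent to tiltingness of both $V_{s-1}$ and $V_{s+1}$. Combined with the symmetric reduction on the $\nabla$ side (available when $r_0 \neq p-1$) and the inductive hypothesis applied to $V_{s-1}$ (and, for $p \geq 3$, to further adjacent terms obtained by iterating the splitting in a neighbourhood of $s$), this pins down whether $V_s$ is tilting. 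Lemma \ref{hatsLemma}(3) ensures the shifts $s \mapsto s \pm 1$ correspond to shifts $\hat s \mapsto \hat s \pm 1$, reducing the matching statement to a combinatorial check.

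The main obstacle is twofold. First, in the last case the inductive step relates $V_s$ to both $V_{s-1}$ and $V_{s+1}$, so some care is needed to extract a bona fide induction --- this is handled by combining the $s$-side and $r$-side reductions and, where neither applies cleanly, by appealing to the Steinberg factorisation at nearby pairs. Second, one must verify combinatorially that the stated conditions on $(\hat r, \hat s)$ propagate correctly through each reduction: the delicate boundary case is $\hat s + 1 = p^{n+1}$ in Condition 1, which would break when incrementing $\hat s$ but requires $\hat s_0 = p-1$ and so is excluded in the $\Delta$-side Theorem \ref{Martin} reduction, and the level shift $n \mapsto n-1$ arising from the Steinberg factorisation.
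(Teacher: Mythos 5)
There is a genuine gap, and it sits at the heart of your reduction step. Your ``observation'' that $M$ is tilting if and only if $M\otimes E$ is tilting is false for $p=2$: there $E\otimes E=T(2)$ is indecomposable and does \emph{not} contain $T(0)$ as a direct summand, so the retraction argument fails; concretely, $\nabla(2)\otimes E=\nabla(2)\otimes\Delta(1)$ is tilting while $\nabla(2)$ is not. Consequently your claimed equivalence ``$V_s$ is tilting iff $V_{s-1}$ and $V_{s+1}$ are tilting'' is wrong at $p=2$: take $r=6$, $s=4$. Then $\nabla(6)\otimes\Delta(3)$ and $\nabla(6)\otimes\Delta(5)$ are both tilting (each reduces via \Cref{p-1} and \Cref{odd prime tiltings} to products with $|r-s|\le 1$), but $\nabla(6)\otimes\Delta(4)$ is not (both weights are even and $|6-4|>p-1$, cf.\ \Cref{notTilting}). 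Only the forward implication ($V_s$ tilting $\Rightarrow$ $V_{s\pm1}$ tilting) survives in general. Even for odd $p$, where the equivalence does hold, your induction on $r+s$ does not close: the step expresses $V_s$ in terms of $V_{s+1}$, which has \emph{larger} $r+s$, and the ``care'' you defer to is exactly the missing content. The paper avoids this by a different recursion: in the mixed case $r=pt+v$, $s=pu+(p-1)$ with $v\ne p-1$ it uses Jantzen's short exact sequence $0\to\nabla(v)\otimes\nabla(t)^F\to\nabla(r)\to\nabla(p-2-v)\otimes\nabla(t-1)^F\to 0$ to reduce to $\nabla(t)\otimes\Delta(u)$ and $\nabla(t-1)\otimes\Delta(u)$ (\Cref{odd prime tiltings}), i.e.\ to weights smaller by a factor of $p$, so the induction genuinely descends.

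The second, related omission is that you have no independent criterion for \emph{non}-tiltingness beyond the base case $s=0$ and propagation by the (one-directional) $E$-tensoring moves; that is not enough to establish the ``only if'' direction. The paper's key negative ingredient is the character argument: if $\nabla(r)\otimes\Delta(s)$ is tilting and $|r-s|>p-1$, every indecomposable summand is $T(j)$ with $j\ge p-1$, so $\chi(p-1)$ divides $\chi(r)\chi(s)$ by \Cref{steinbergChar}, which by the roots-of-unity computation forces $r$ or $s$ to be congruent to $p-1$ modulo $p$ (\Cref{notTilting}, sharpened to \Cref{notTilting2}). Something playing this role is indispensable; without it, your scheme can at best show that certain modules \emph{are} tilting. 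Your Steinberg-factorisation case $r_0=s_0=p-1$ is correct and coincides with \Cref{p-1} (including the $H^0(G_1,-)$ argument for the converse), but the remaining cases need to be rebuilt along the lines above.
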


The following picture illustrates which of the $\nabla(r) \otimes \Delta(s)$ are tilting up to $r,s \leq 31$ for $p=2$
.
\begin{center}
	\includegraphics[scale=0.74]{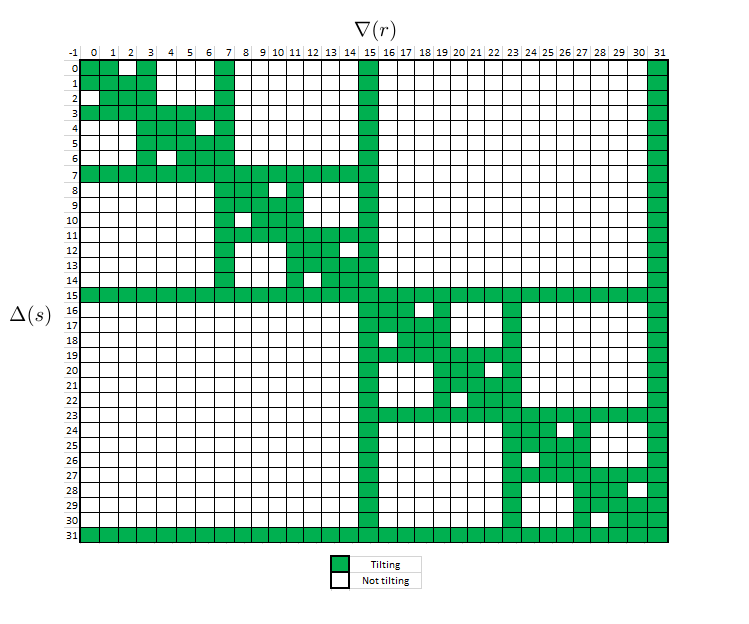}
	\captionof{figure}{The modules $\nabla(r)\otimes\Delta(s)$ when $\text{char}(k) = 2$.}
\end{center}

Before beginning the proof of this theorem, we will say a quick word on the figure above. It's clear that the pairs $(r,s)$ for which $(\hat{r},\hat{s}) = (r,s)$ are given by the intervals $[2^n, 2^{n+1}-1] \times [0, 2^n - 1]$ for $r \geq s$, and vice versa for $s \geq r$. Of these, according to the theorem, only the modules $\nabla(2^{n+1} - 1)\otimes\Delta(s)$ and $\nabla(r)\otimes\Delta(2^n - 1)$ are tilting (again, with $r \geq s$). With the mirrored situation for $s \geq r$, this accounts for the horizontal and vertical green lines appearing every $2^n - 1$.
\\
\\
For $(r,s)$ in the interval $[2^n, 2^{n+1} - 1]\times [2^n, 2^{n+1} - 1]$, we can determine whether or not $\nabla(r)\otimes\Delta(s)$ is tilting by looking (inductively) at the module $\nabla(r - 2^n)\otimes\Delta(s - 2^n)$. So, for example, in the block $[32, 63]\times[32,63]$ we would find a copy of the above figure. For the blocks $[32, 63]\times[0,31]$, we have only $\nabla(63)\otimes\Delta(s)$ and $\nabla(r)\otimes\Delta(31)$ are tilting, since in these blocks we have $(r,s) = (\hat{r},\hat{s})$. Similarly for block $[0,31]\times[32,63]$. 
\\
\\
In general we have that each block $[0, 2^n-1]\times[0, 2^n-1]$ can be split into four distinct blocks, each of size $2^{n-1}\times 2^{n-1}$. The two diagonal blocks are identical, and given by the block $[0, 2^{n-1}-1]\times[0, 2^{n-1}-1]$. The off diagonal blocks are those where $(\hat{r},\hat{s}) = (r,s)$.
\\
\\
To prove the theorem, we will first gather some elementary results on the modules $\nabla(r)\otimes\Delta(s)$.

\subsection{Lemmas}
\begin{lemma}\label{p-1}
Let $t,u \in \mathbb{N}$. The module $\nabla(pt + (p-1))\otimes\Delta(pu + (p-1))$ is tilting if and only if the module $\nabla(t)\otimes\Delta(u)$ is tilting.
\end{lemma}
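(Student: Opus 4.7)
The plan is to reduce the tensor product on the left to one involving Steinberg modules and a Frobenius twist of the tensor product on the right, and then use Lemmas \ref{tiltingTwist} and \ref{tiltingLemma} for the two directions.

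The first step is to establish the factorisation
\[
\nabla(pt + (p-1)) \;=\; \mathrm{St} \otimes \nabla(t)^F, \qquad \Delta(pu + (p-1)) \;=\; \mathrm{St} \otimes \Delta(u)^F.
\]
To get this, I would start from the fact (used already in the proof of \Cref{tiltingTwist}) that $\mathrm{St} \otimes \nabla(t)^F$ admits a $\nabla$-filtration with sections $\nabla((p-1) + pt)$ for each section $\nabla(t)$ of $\nabla(t)$, i.e. a single section $\nabla(pt + (p-1))$, via \cite[Proposition II.3.19]{Jantzen}. A dimension count ($p(t+1)$ on both sides) then forces $\mathrm{St} \otimes \nabla(t)^F = \nabla(pt + (p-1))$; dualising gives the statement for $\Delta$. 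Consequently
\[
\nabla(pt + (p-1)) \otimes \Delta(pu + (p-1)) \;=\; (\mathrm{St} \otimes \mathrm{St}) \otimes \bigl(\nabla(t) \otimes \Delta(u)\bigr)^{F}.
\]

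For the ``if'' direction, suppose $\nabla(t) \otimes \Delta(u)$ is tilting. Since $\mathrm{St}$ is a tilting module that is projective as a $G_1$-module \cite[Proposition~II.10.1]{Jantzen}, so is $\mathrm{St} \otimes \mathrm{St}$ (the tensor product of a $G_1$-projective with any $G_1$-module is again $G_1$-projective, and the tensor product of tilting modules is tilting by \cite[Theorem~1]{Mathieu}). Now \Cref{tiltingTwist} applied with $T_1 = \mathrm{St} \otimes \mathrm{St}$ and $T_2 = \nabla(t) \otimes \Delta(u)$ gives that $\nabla(pt + (p-1)) \otimes \Delta(pu + (p-1))$ is tilting.

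For the ``only if'' direction, I would apply \Cref{tiltingLemma} to $V = \nabla(pt + (p-1)) \otimes \Delta(pu + (p-1))$. Because the Frobenius twist is trivial on $G_1$, we have
\[
H^0\bigl(G_1, V\bigr) \;=\; H^0\bigl(G_1, \mathrm{St} \otimes \mathrm{St}\bigr) \otimes \bigl(\nabla(t) \otimes \Delta(u)\bigr)^F.
\]
The factor $H^0(G_1, \mathrm{St} \otimes \mathrm{St}) = \mathrm{Hom}_{G_1}(\mathrm{St}, \mathrm{St})$ is one-dimensional by Schur's lemma (the Steinberg module is simple as a $G_1$-module), hence is the trivial $G$-module $k$. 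Thus $H^0(G_1, V) = (\nabla(t) \otimes \Delta(u))^F$, so in the notation of \Cref{tiltingLemma} we have $W = \nabla(t) \otimes \Delta(u)$, and \Cref{tiltingLemma} concludes that this module is tilting.

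The main obstacle is the initial identification $\nabla(pt + (p-1)) = \mathrm{St} \otimes \nabla(t)^F$; once that is in place, both directions are short applications of the two lemmas from Section~1.2. The only other thing that needs care is verifying the $G_1$-invariants of $\mathrm{St} \otimes \mathrm{St}$, which reduces to the well-known $G_1$-simplicity of the Steinberg module.
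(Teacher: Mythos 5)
Your proposal is correct and follows essentially the same route as the paper: both use the identification $\nabla(pt+(p-1))\otimes\Delta(pu+(p-1)) = \mathrm{St}\otimes\mathrm{St}\otimes(\nabla(t)\otimes\Delta(u))^F$ via \cite[Proposition~II.3.19]{Jantzen}, then \Cref{tiltingTwist} for one direction and \Cref{tiltingLemma} for the other. Your explicit computation of $H^0(G_1,\mathrm{St}\otimes\mathrm{St})=k$ via Schur's lemma fills in a detail the paper leaves implicit, and is correct.
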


\begin{proof}
	First recall the identities $\nabla(pt + (p-1)) = \nabla(p-1)\otimes\nabla(t)^F$ and $\Delta(pu + (p-1)) = \Delta(p-1)\otimes\Delta(u)^F$, found in \cite[Proposition~II.3.19]{Jantzen}. Using these we may rewrite $\nabla(pt + (p-1))\otimes\Delta(pu + (p-1))$ as $\nabla(p-1)\otimes\Delta(p-1)\otimes(\nabla(t)\otimes\Delta(u))^F$.
\\
\\
Using \Cref{tiltingLemma} we easily obtain the forward implication. The reverse implication is also clear since $\nabla(p-1)\otimes\Delta(p-1)$ is tilting and projective as a $G_1$-module, so we can apply \Cref{tiltingTwist}
\end{proof}

\begin{lemma}\label{odd prime tiltings}
Let $r = pt + v$, $s = pu + (p-1)$ for some $ 0 \leq v \leq p-2$ and $t,u \in \mathbb{N}$. Then $\nabla(r)\otimes\Delta(s)$ is tilting if and only if both $\nabla(t)\otimes\Delta(u)$ and $\nabla(t-1)\otimes\Delta(u)$ are tilting.
\end{lemma}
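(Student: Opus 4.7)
I would prove the two implications separately.

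For the forward direction, suppose $V := \nabla(pt+v)\otimes\Delta(pu+p-1)$ is tilting. By \Cref{Martin}, the splitting
\[
\nabla(pt+w)\otimes E \;=\; \nabla(pt+w-1) \,\oplus\, \nabla(pt+w+1)
\]
holds for every $w\in\{0,1,\ldots,p-2\}$, since $p\nmid pt+w+1$ for such $w$. Iterating these splittings inside $V\otimes E^{\otimes(p-1-v)}$, walking the bottom digit from $v$ through $v+1,\ldots,p-2$ and finally using the splitting at $w=p-2$, realises $\nabla(pt+p-1)\otimes\Delta(pu+p-1)$ as a direct summand. Likewise, iterating inside $V\otimes E^{\otimes(v+1)}$ and walking down from $v$ through $v-1,\ldots,0$, the splitting at $w=0$ realises $\nabla(pt-1)\otimes\Delta(pu+p-1)$ as a direct summand. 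Since $V\otimes E^{\otimes k}$ is a tensor of tilting modules and hence tilting whenever $V$ is, both summands are tilting. Writing $\nabla(pt+p-1)=\nabla(tp+(p-1))$ and $\nabla(pt-1)=\nabla((t-1)p+(p-1))$, \Cref{p-1} applied to each yields that $\nabla(t)\otimes\Delta(u)$ and $\nabla(t-1)\otimes\Delta(u)$ are tilting.

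For the reverse direction, suppose both $\nabla(t)\otimes\Delta(u)$ and $\nabla(t-1)\otimes\Delta(u)$ are tilting. Using $\Delta(pu+p-1)=\text{St}\otimes\Delta(u)^F$ from \cite[II.3.19]{Jantzen}, the aim is to establish the module isomorphism
\[
\nabla(pt+v)\otimes\text{St} \;\cong\; \bigl(\text{St}\otimes\nabla(v)\otimes\nabla(t)^F\bigr) \,\oplus\, \bigl(\text{St}\otimes\nabla(p-v-2)\otimes\nabla(t-1)^F\bigr),
\]
motivated by the Clebsch--Gordan identity
\[
\chi(pt+v)\chi(p-1) \;=\; \chi(pt+p-1)\chi(v) + \chi(pt-1)\chi(p-v-2)
\]
together with $\nabla(pt+p-1)=\text{St}\otimes\nabla(t)^F$ and $\nabla(pt-1)=\text{St}\otimes\nabla(t-1)^F$. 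Tensoring the decomposition with $\Delta(u)^F$ writes
\[
V \;\cong\; \bigl(\text{St}\otimes\nabla(v)\bigr)\otimes\bigl(\nabla(t)\otimes\Delta(u)\bigr)^F \,\oplus\, \bigl(\text{St}\otimes\nabla(p-v-2)\bigr)\otimes\bigl(\nabla(t-1)\otimes\Delta(u)\bigr)^F.
\]
Each summand is of the form $T_1\otimes T_2^F$ with $T_1$ tilting (the tensor of the tilting modules $\text{St}$ and $\nabla(v)=T(v)$, respectively $\nabla(p-v-2)=T(p-v-2)$, both having highest weight at most $p-1$) and projective as a $G_1$-module (tensoring with the $G_1$-projective $\text{St}$), and $T_2$ tilting by hypothesis. \Cref{tiltingTwist} therefore makes each summand tilting, whence $V$ is tilting.

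The main obstacle is upgrading the character identity to the module-level decomposition of $\nabla(pt+v)\otimes\text{St}$: the character match is immediate from Clebsch--Gordan, and both sides carry good filtrations with matching $\nabla$-factors, but producing an actual splitting as $G$-modules requires further work. The natural route is to construct explicit $G$-homomorphisms from each right-hand summand into $\nabla(pt+v)\otimes\text{St}$ -- identifying highest-weight vectors of weights $pt+v+p-1$ and $pt+p-v-3$ and showing the submodules they generate are isomorphic to the two intended summands -- after which character comparison forces the maps to combine into an isomorphism.
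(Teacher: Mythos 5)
Your forward implication is essentially the paper's own argument for that direction: tensor repeatedly with $E$, use \Cref{Martin} to walk the bottom digit up to $p-1$ and down to $-1$, extract $\nabla(pt+p-1)\otimes\Delta(s)$ and $\nabla(pt-1)\otimes\Delta(s)$ as direct summands of a tilting module, and finish with \Cref{p-1}. That half is fine.

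The reverse implication has a genuine gap, and you have correctly located it yourself: everything hinges on the direct sum decomposition of $\nabla(pt+v)\otimes\mathrm{St}$, and a character match together with ``both sides have good filtrations'' does not produce a module isomorphism. Your proposed repair --- generating submodules from highest weight vectors of weights $pt+v+p-1$ and $pt+p-v-3$ --- does not work as stated, because the intended summands such as $\mathrm{St}\otimes\nabla(p-v-2)\otimes\nabla(t-1)^F$ are not highest weight modules: they are in general decomposable and not generated by a single weight vector, so the submodule generated by your vector need not be the whole summand, and the ``character comparison forces an isomorphism'' step collapses. The paper avoids the issue entirely. It starts from Jantzen's short exact sequence
$$0\longrightarrow\nabla(v)\otimes\nabla(t)^F\longrightarrow\nabla(pt+v)\longrightarrow\nabla(p-2-v)\otimes\nabla(t-1)^F\longrightarrow 0$$
(\cite[Satz~3.8, Bemerkung~2]{Jantzen1980}), which is an extension rather than a splitting, and tensors it with $\Delta(p-1)\otimes\Delta(u)^F$. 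Only at that point is the hypothesis used: by \Cref{tiltingTwist} the two end terms $\nabla(v)\otimes\Delta(p-1)\otimes(\nabla(t)\otimes\Delta(u))^F$ and $\nabla(p-2-v)\otimes\Delta(p-1)\otimes(\nabla(t-1)\otimes\Delta(u))^F$ are tilting, and every extension of a tilting module by a tilting module splits (\cite[Proposition~II.4.16]{Jantzen}). So the splitting you are trying to establish a priori is instead deduced a posteriori, as a consequence of the hypothesis. Replacing your unproven isomorphism by this exact sequence plus the $\mathrm{Ext}^1$-vanishing argument makes your reverse direction go through.
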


\begin{proof}

We will use the identity $\Delta(s) = \Delta(p-1)\otimes\Delta(u)^F$ as above, and the short exact sequence

$$0 \longrightarrow \nabla(v)\otimes\nabla(t)^F \longrightarrow \nabla(r) \longrightarrow \nabla(p-2-v)\otimes\nabla(t-1)^F \longrightarrow 0$$

which can be found in \cite[Satz~3.8, Bemerkung~2]{Jantzen1980}, in its dual form for Weyl modules. Notice that in the case $p=2$ we have that $v = 0 = p-2-v$, so this reduces to the sequence

$$0 \longrightarrow \nabla(t)^F \longrightarrow \nabla(r) \longrightarrow \nabla(t-1)^F \longrightarrow 0.$$

Tensoring the former with the latter gives the following short exact sequence
\begin{align*}
0 	\longrightarrow \nabla(v)\otimes&\Delta(p-1)\otimes(\nabla(t)\otimes\Delta(u))^F 
	\longrightarrow \nabla(r)\otimes\Delta(s)  \\
					&\longrightarrow \nabla(p-2-v)\otimes\Delta(p-1)\otimes(\nabla(t-1)\otimes\Delta(u))^F \longrightarrow 0.
\end{align*}
Since both $\nabla(v)\otimes\Delta(p-1)$ and $\nabla(p-2-v)\otimes\Delta(p-1)$ are tilting and projective as $G_1$-modules, if both $\nabla(t)\otimes\Delta(u)$ and $\nabla(t-1)\otimes\Delta(u)$ are also tilting then we have that $\nabla(r)\otimes\Delta(s)$ is an extension of tilting modules. The only such extensions are split (e.g. by \cite[Proposition~II.4.16]{Jantzen}), so we obtain $\nabla(r)\otimes\Delta(s)$ as a direct sum of two tilting modules, and hence is tilting itself.
\\
\\
For the converse statement we make the following observation: If $\nabla(r)\otimes\Delta(s) = \nabla(v + pt)\otimes\Delta(s)$ is tilting for some $v \in \{0, 1, \ldots, p-2\}$, then each module $\nabla(v' + pt)\otimes\Delta(s)$ for $v' \in\{0, 1, \ldots, p-2, p-1\}$ and the module $\nabla((p-1) + p(t-1))\otimes\Delta(s)$ are tilting too. This follows by repeated application of \Cref{Martin} by considering the tilting module $(\nabla(v + pt)\otimes E)\otimes\Delta(s)$ (as in \Cref{tensorE}). The result now follows from \Cref{p-1}. 

\end{proof}

It remains to determine which of the modules $\nabla(r)\otimes\Delta(s)$ are tilting when neither $r$ nor $s$ is congruent to $p-1$ modulo $p$. It turns out that this only occurs in the cases given in \Cref{tensorE}. In order to show this we will make use of the character.

\begin{lemma}\label{steinbergCharGeneral}
	Let $G$ be a semisimple, simply connected algebraic group over $k$, and let $T$ be a $G$-module that is projective as a $G_1$-module. Then $\chi((p-1)\rho)$ divides $\text{Ch}(T)$ in $\mathbb{Z}[\chi(t)]^W$.
\end{lemma}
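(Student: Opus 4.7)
To avoid the name clash with the maximal torus, write $M$ for the given $G$-module (the lemma's $T$). The plan is to establish the divisibility $\chi((p-1)\rho)\mid\text{Ch}(M)$ first in the ambient Laurent polynomial ring $\mathbb{Z}[X(T)]$, and then to upgrade it to $\mathbb{Z}[X(T)]^W$ by a soft domain argument: given a factorization $\text{Ch}(M)=\chi((p-1)\rho)\cdot f$ with $f\in\mathbb{Z}[X(T)]$, the $W$-invariance of both $\text{Ch}(M)$ and $\chi((p-1)\rho)$, combined with $\mathbb{Z}[X(T)]$ being an integral domain, forces $f$ to be $W$-invariant as well.

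To produce the factorization in $\mathbb{Z}[X(T)]$, I would restrict $M$ from $G$ to the subgroup scheme $G_1T$. Since $M$ is a $G$-module, its $G_1$- and $T$-actions combine compatibly, so $M$ is a $G_1T$-module, and $G_1$-projectivity is equivalent to $G_1T$-projectivity in this setting. Decomposing $M$ into indecomposable projective $G_1T$-modules $\hat{Q}_1(\lambda_i)$ for various $\lambda_i\in X(T)$ reduces the question to showing that $\chi((p-1)\rho)\mid \text{Ch}(\hat{Q}_1(\lambda))$ in $\mathbb{Z}[X(T)]$ for each weight $\lambda$.

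For this step, I would invoke the standard structural fact (Jantzen, \S II.11) that every $\hat{Q}_1(\lambda)$ admits a filtration by baby Verma modules $\hat{Z}_1(\mu)$. A direct calculation with the distribution algebra of $G_1T$ yields
$$\text{Ch}(\hat{Z}_1(\mu)) = e^\mu\prod_{\alpha>0}\bigl(1+e^{-\alpha}+\cdots+e^{-(p-1)\alpha}\bigr) = e^{\mu-(p-1)\rho}\cdot\chi((p-1)\rho),$$
using that $\text{St}=L_1((p-1)\rho)=\hat{Z}_1((p-1)\rho)$ is simultaneously the unique irreducible baby Verma and the Steinberg module. Summing through the filtration of $\hat{Q}_1(\lambda)$ exhibits $\text{Ch}(\hat{Q}_1(\lambda))$ as $\chi((p-1)\rho)$ times a Laurent polynomial on $X(T)$, which completes the divisibility in $\mathbb{Z}[X(T)]$.

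The main obstacle is bookkeeping around the $G_1T$-theory: one has to verify that $G_1$-projectivity transfers to $G_1T$-projectivity for a $G$-module and cite the baby Verma filtration of each $\hat{Q}_1(\lambda)$ correctly; the representation-theoretic content is essentially the single identity $\text{Ch}(\hat{Z}_1(\mu))=e^{\mu-(p-1)\rho}\text{Ch}(\text{St})$. A tempting alternative is to argue instead that any $G_1$-projective $G$-module is a direct summand of $\text{St}\otimes N$ for some $G$-module $N$, and read the divisibility off from $\text{Ch}(\text{St}\otimes N)=\chi((p-1)\rho)\cdot\text{Ch}(N)$; but this summand statement itself ultimately rests on the same $G_1T$-projective theory, so the route above seems the more direct one.
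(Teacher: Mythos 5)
Your argument is correct, but it is not the route the paper takes: the paper disposes of this lemma in one line by observing that a projective $G_1$-module is in particular a projective $B_1$-module and then citing Donkin's result [1.2(2)] on characters of $B_1$-projectives. Your proof instead works over $G_1T$: decompose $M|_{G_1T}$ into projective indecomposables $\hat{Q}_1(\lambda)$, invoke the $\hat{Z}_1$-filtration of these (Jantzen II.11.2), and use the identity $\text{Ch}(\hat{Z}_1(\mu)) = e^{\mu-(p-1)\rho}\chi((p-1)\rho)$, finishing with the domain argument to push the quotient into the $W$-invariants. Both proofs ultimately rest on the same product formula $\prod_{\alpha>0}(1+e^{-\alpha}+\cdots+e^{-(p-1)\alpha})$; the difference is that over $B_1T$ (the paper's implicit route) every projective module is already a \emph{direct sum} of modules with exactly this character up to a monomial twist, so no filtration theorem is needed, whereas your $G_1T$ route has to import the (true, but nontrivial) fact that every projective $G_1T$-module admits a $\hat{Z}_1$-filtration. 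Your transfer of projectivity from $G_1$ to $G_1T$ and the $W$-invariance upgrade are both sound; the only cosmetic imprecision is calling $\hat{Z}_1((p-1)\rho)$ ``the unique irreducible baby Verma'' --- it is the unique one with restricted highest weight, which is all you need. In short: a correct, slightly heavier, self-contained alternative to the paper's citation.
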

\begin{proof}

This follows immediately from \cite[1.2(2)]{DonkinBundles}, since $T$ must also be a projective $B_1$ module.
%	As in \Cref{tiltingTwist}, we may write $T$ as a sum of indecomposable tilting modules $T(\lambda)$, where $\lambda \in X^{+}$ is such that %$(\lambda, \check{\alpha}) \geq p - 1$ for all simple roots $\alpha$. Since the character of a direct sum of modules is equal to the sum of the %characters, it's sufficient to show the result for such $T(\lambda)$. We will do this by induction (on the height of the weight?).
%\\
%\\
%First it's clear that $\chi((p-1)\rho)$ divides $\text{Ch}(T((p-1)\rho)) =\text{Ch}(\nabla((p-1)\rho)$. For the inductive step, first we notice %that we may write $\lambda = (p-1)\rho + \theta$ for some $\theta \in X^{+}$ [citation needed]. Using this we write
%$$T((p-1)\rho)\otimes T(\theta) = T((p-1)\rho + \theta) \bigoplus_{\mu < \lambda} T(\mu),$$
%where the sum is over some characters $\mu$, each such that $\mu < \lambda$, and for all simple roots $\alpha$ we have $(\mu, \check{\alpha}) %\geq p - 1$ (since $T(\mu)$ must be a projective $G_1$-module). By induction we have that  $\chi((p-1)\rho)$ divides $\text{Ch}(T(\mu))$ for all %such $\mu$, so taking the character of the above equation  and rearranging we find that  $\chi((p-1)\rho)$ divides $\text{Ch}(T((p-1)\rho + %\theta)) = \text{Ch}(T(\lambda))$.

\end{proof}

We now revert to the case $G = SL_2(k)$ and obtain the following corollary.

\begin{corollary} \label{steinbergChar}
For all $r \geq p-1$, the character of the Steinberg module $\nabla(p-1)$ divides that of the indecomposable tilting module $T(r)$ of highest weight $r$. \qed
\end{corollary}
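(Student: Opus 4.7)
The plan is to derive the corollary as an immediate specialization of \Cref{steinbergCharGeneral} to $G = SL_2(k)$, with the tilting module $T$ taken to be $T(r)$. The whole content of the corollary, once that lemma is available, reduces to verifying a single hypothesis: that $T(r)$ is projective as a $G_1$-module whenever $r \geq p - 1$.

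To do this, I would invoke the fact already cited in the proof of \Cref{tiltingTwist} (Donkin, Proposition~2.4), which says that $T(\lambda)$ is projective as a $G_1$-module whenever $(\lambda,\check\alpha) \geq p - 1$ for every simple root $\alpha$. In the $SL_2$ setting there is only the single simple root, and under the usual identification $X(T) = \mathbb{Z}$ the pairing $(\lambda,\check\alpha)$ is just the integer $\lambda$, while the half-sum of positive roots $\rho$ is $1$. Hence the hypothesis $(\lambda,\check\alpha) \geq p - 1$ becomes simply $r \geq p - 1$, which is precisely what is assumed.

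Having verified projectivity, \Cref{steinbergCharGeneral} yields that $\chi((p-1)\rho)$ divides $\mathrm{Ch}(T(r))$ in $\mathbb{Z}[\chi(1)]$. Translating $(p-1)\rho$ back to the integer $p-1$ under our identification, and recalling that by definition $\chi(p-1) = \mathrm{Ch}(\nabla(p-1))$, we obtain exactly the divisibility claimed.

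I do not expect any real obstacle here; the work has all been done in \Cref{steinbergCharGeneral}, and the step from that lemma to the corollary is essentially a matter of translating the general notation for a semisimple group into the rank-one $SL_2$ notation used throughout the rest of the paper. If there is any subtlety, it lies only in making the pairing/weight identification precise and in recording the statement of Donkin's projectivity criterion carefully; neither point requires genuine argument.
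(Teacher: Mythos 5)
Your proposal is correct and matches the paper's (implicit) argument exactly: the corollary is stated with a \qed precisely because it is the specialization of \Cref{steinbergCharGeneral} to $G = SL_2(k)$, using the same projectivity criterion from Donkin's Proposition~2.4 that the paper already invokes in the proof of \Cref{tiltingTwist}. Your verification that $(\lambda,\check\alpha) \geq p-1$ reduces to $r \geq p-1$ and that $(p-1)\rho$ corresponds to $p-1$ under the identification $X(T)\cong\mathbb{Z}$ is exactly the translation the paper leaves to the reader.
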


%\begin{proof}
%We will prove this result by induction, noting immediately that for $r = p-1$ we have $T(r) = \nabla(r)$ so the result holds trivially. For the inductive step, let $r = p-1 + m$ and consider the tensor product $\nabla(p-1)\otimes T(m)$, projective as a $G_1$-module. The highest weight of this tilting module is $r$, so it's decomposition into indecomposable tilting modules must look like
%
%$$ \nabla(p-1)\otimes T(m) = T(r) \oplus \sum_{j = p-1}^{m-2} a_j T(j),$$
%
%for some $a_j \in \mathbb{Z}$ (for brevity here we have written $a_j T(j)$ for the sum of $T(j)$ with itself $a_j$ times, but this equation really takes place in the Grothendieck ring of $G$-modules). The first $p-2$ indecomposable tilting modules do not appear in such a sum, since each summand must be projective as a $G_1$-module, and these are not. Taking the character then, and suitably rearranging, we are left with
%
%$$ \text{Ch}(T(r)) = \sum_{j = p-1}^{m-2}a_j\text{Ch}(T(j)) - \text{Ch}(\nabla(p-1)\otimes T(m)),$$
%
%so the result holds by induction.
%\end{proof}

Now let's consider the character $\chi(r) \in \mathbb{Z}[x, x^{-1}]$. We have that
\begin{center}
\begin{align*}
	\chi(r) = & \ x^r + x^{r-2} + \ldots + x^0 + \ldots + x^{-r}\\
			= & \ \frac{1}{x^r}(x^{2r} + x^{2r - 2} + \ldots + 1)\\
			= & \ \frac{1}{x^r} \bigg( \frac{x^{2r+2} - 1}{x^2 - 1} \bigg),
\end{align*}
\end{center}
so the roots of this equation are the $(2r+2)^{\text{th}}$ roots of unity, except $\pm 1$. If $\chi(p-1)$ divides $\chi(r)$ then, we must have the $2p^{\text{th}}$ roots of unity are also $(2r+2)^{\text{th}}$ roots of unity, which would imply that $p$ divides $r+1$, i.e. that $r$ is congruent to $p-1$ modulo $p$.
\\
\\
Hence we have shown that if both $r$ and $s$ are not congruent to $p-1$ modulo $p$, the character $\chi(p-1)$ does not divide $\text{Ch}(\nabla(r)\otimes\Delta(s)) = \chi(r)\chi(s)$. Now suppose that $\nabla(r)\otimes\Delta(s)$ is tilting, and that $|r-s| > p-1$. By considering its good filtration, we see that the decomposition of $\nabla(r)\otimes\Delta(s)$ into indecomposable tilting modules cannot contain any $T(j)$ for $j =0, \ldots, p-1$. By \Cref{steinbergChar} its character is divisible by $\chi(p-1)$ but the above calculation contradicts this. In summary:

\begin{lemma} \label{notTilting}
	For $r$ and $s$ both not congruent to $p-1$ modulo $p$, and $|r-s| > p-1$, the module $\nabla(r)\otimes\Delta(s)$ is not tilting. \qed
\end{lemma}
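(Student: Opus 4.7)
The plan is to argue by contradiction, using Corollary~\ref{steinbergChar} together with the explicit formula for $\chi(r)$. Suppose $\nabla(r)\otimes\Delta(s)$ is tilting and, without loss of generality, that $r \geq s$, so $r - s > p-1$. Since any tilting module decomposes uniquely into indecomposable tiltings $T(\lambda)$, and each highest weight $\lambda$ occurring must be a weight of the module, the Clebsch-Gordan formula confines each such $\lambda$ to the set $\{r-s,\, r-s+2,\, \ldots,\, r+s\}$. In particular every $\lambda$ satisfies $\lambda \geq r-s > p-1$, so Corollary~\ref{steinbergChar} gives $\chi(p-1) \mid \text{Ch}(T(\lambda))$ for each summand, whence
\[
\chi(p-1) \;\bigm|\; \chi(r)\chi(s) \;=\; \text{Ch}\bigl(\nabla(r)\otimes\Delta(s)\bigr)
\]
in the Laurent polynomial ring $\mathbb{Z}[x, x^{-1}]$.

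Next I would show that this divisibility is incompatible with the congruence hypotheses on $r$ and $s$. The identity $\chi(r) = x^{-r}(x^{2r+2}-1)/(x^2-1)$ realises $\chi(r)$ as a Laurent polynomial whose complex zeros are exactly the $(2r+2)$-th roots of unity other than $\pm 1$, and analogously for $\chi(p-1)$ with $2p$ in place of $2r+2$. Let $\zeta$ be a primitive $2p$-th root of unity, which is a simple zero of $\chi(p-1)$. If $\chi(p-1)$ divided $\chi(r)\chi(s)$, then $\zeta$ would have to be a zero of $\chi(r)$ or of $\chi(s)$; in the first case $\zeta^{2r+2} = 1$ forces $p \mid r+1$, which is ruled out by the hypothesis $r \not\equiv p-1 \pmod p$, and the second case is ruled out symmetrically. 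This contradicts the divisibility just obtained.

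The only delicate step is the passage from polynomial divisibility to containment of zero sets. After multiplying through by a suitable monomial, $\chi(p-1)$ becomes an honest polynomial in $\mathbb{Z}[x]$ whose irreducible factors over $\mathbb{Q}$ are the cyclotomic polynomials $\Phi_d$ with $d \mid 2p$ and $d \notin \{1,2\}$; in particular $\Phi_{2p}$, having $\zeta$ as a root, is one of them. Standard UFD considerations in $\mathbb{Z}[x]$ then deliver the contrapositive we need, and the lemma follows. Apart from this bookkeeping with cyclotomic factors, the argument is entirely routine.
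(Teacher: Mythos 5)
Your argument is correct and follows essentially the same route as the paper: apply \Cref{steinbergChar} to each indecomposable summand (all of whose highest weights lie in $\{r-s,\ r-s+2,\ldots,r+s\}$ and hence exceed $p-1$), then derive a contradiction from the root-of-unity computation showing $\chi(p-1)\nmid\chi(r)\chi(s)$ when neither $r+1$ nor $s+1$ is divisible by $p$. One small point: the lower bound $\lambda\ge r-s$ on the highest weights of the summands should be justified via the good-filtration multiplicities (each $T(\lambda)$ contributes a copy of $\nabla(\lambda)$, and the sections are exactly $\nabla(r+s-2i)$ for $i=0,\ldots,s$) rather than by $\lambda$ merely being a weight of the module, which only bounds it above; otherwise your cyclotomic bookkeeping actually spells out the divisibility step more carefully than the paper does.
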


There are now only a few more cases which we have not considered. These occur when $|r-s| \leq p-1$, but not both of $r$ and $s$ lie in the set given in \Cref{tensorE} (for example, take $r = np$ and $s = np-2$). We can swiftly deal with these cases, by once again appealing to \Cref{Martin}, but we must first make precise exactly which $r$ and $s$ we are considering. We will assume that $r > s$, but the argument works equally well for $r < s$.
\\
\\
Since $|r-s| \leq p-1$ and at least one of $r$ and $s$ is not in the set  $\{np - 1,\ np,\ np + 1, \ldots,\ (n+1)p - 1\}$ for all $n \in \mathbb{N}$ we must have that $r \in \{np, np + 1, \ldots, (n+1)p - 2\}$ and $s \in \{(n-1)p, \ldots, np - 2\}$ for some fixed $n$. Now if $\nabla(r)\otimes\Delta(s)$ is tilting, then by applying \Cref{Martin} (as in \Cref{tensorE}) we obtain that for all $r' \in \{np, np + 1, \ldots, (n+1)p - 2\}$ and $s' \in \{(n-1)p, \ldots np - 2\}$, the module $\nabla(r')\otimes\Delta(s')$ is tilting. Taking $r' = (n+1)p - 2$ and $s' = (n-1)p$ however, contradicts \Cref{notTilting}, and so $\nabla(r)\otimes\Delta(s)$ is not tilting.

\begin{lemma} \label{notTilting2}
	For $r$ and $s$ both not congruent to $p-1$ modulo $p$, and not both in the set $\{np - 1,\ np,\ np + 1, \ldots,\ (n+1)p - 1\}$ for any $n \in \mathbb{N}$, the module $\nabla(r)\otimes\Delta(s)$ is not tilting. \qed
\end{lemma}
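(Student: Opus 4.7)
The plan is to reduce to Lemma \ref{notTilting} via a propagation argument based on Theorem \ref{Martin}. First I would dispose of the case $|r-s| > p-1$ immediately by invoking Lemma \ref{notTilting}, so the remaining case is $|r-s| \leq p-1$ with $r, s$ not both in any single set $\{np-1, np, \ldots, (n+1)p-1\}$. Assume WLOG $r > s$.

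Next I would pin down where $r$ and $s$ must sit. Since $r \not\equiv p-1 \pmod p$, $r$ lies in the ``interior'' of some $p$-block, say $r \in \{np, np+1, \ldots, (n+1)p-2\}$. The hypothesis that $r$ and $s$ do not co-inhabit a single block of the form $\{mp-1, \ldots, (m+1)p-1\}$ forces $s \leq np-2$, while $|r-s| \leq p-1$ forces $s \geq (n-1)p+1 \geq (n-1)p$. So we are in the regime $r \in \{np, \ldots, (n+1)p-2\}$ and $s \in \{(n-1)p, \ldots, np-2\}$ for some fixed $n$.

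The main step is propagation. Suppose, for contradiction, that $\nabla(r)\otimes\Delta(s)$ is tilting. Tensor with $E$ in each variable: the modules $\nabla(t)\otimes E$ and $E\otimes \Delta(t)$ split as $\nabla(t-1)\oplus \nabla(t+1)$ and $\Delta(t-1)\oplus\Delta(t+1)$ respectively whenever $p \nmid t+1$, by Theorem \ref{Martin}. Exactly as in the proof of Lemma \ref{tensorE}, this lets me move $r$ up and down inside $\{np, \ldots, (n+1)p-2\}$ and $s$ up and down inside $\{(n-1)p, \ldots, np-2\}$, since the forbidden values $np-1$ and $(n+1)p-1$ are precisely the boundary points where $p \mid t+1$ fails to hold, and they are avoided. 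Each step sends a tilting module to one whose summands are all tilting, so in particular $\nabla(r')\otimes\Delta(s')$ is tilting for every $r' \in \{np, \ldots, (n+1)p-2\}$ and $s' \in \{(n-1)p, \ldots, np-2\}$.

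Finally I would take the extremal pair $r' = (n+1)p-2$ and $s' = (n-1)p$. Neither is congruent to $p-1$ modulo $p$, and $|r'-s'| = 2p-2 > p-1$, so Lemma \ref{notTilting} says $\nabla(r')\otimes\Delta(s')$ is not tilting, contradicting the propagation. I expect the only mild obstacle to be bookkeeping in the propagation step: one must verify that the blocks condition $p \nmid t+1$ is satisfied at each intermediate splitting and that the induction on $r'$ and $s'$ can be run independently, but this is essentially the content of Lemma \ref{tensorE} transplanted to the slightly shifted range, so it should go through verbatim.
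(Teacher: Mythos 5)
Your proposal is correct and follows essentially the same route as the paper: reduce to the range $r \in \{np, \ldots, (n+1)p-2\}$, $s \in \{(n-1)p, \ldots, np-2\}$, propagate tilting across both ranges by tensoring with $E$ and splitting via Theorem~\ref{Martin} (valid since $p \nmid t+1$ throughout these ranges), and then contradict Lemma~\ref{notTilting} at the extremal pair $r' = (n+1)p-2$, $s' = (n-1)p$. The only quibble is a wording slip (``where $p \mid t+1$ fails to hold'' should read ``where $p \nmid t+1$ fails to hold''), which does not affect the argument.
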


We have now determined exactly which of the modules $\nabla(r)\otimes\Delta(s)$ are tilting for all primes $p$ (recall that for $r, s \in \{ 0,1,\ldots,p-1\}$ the module $\nabla(r)\otimes\Delta(s)$ is tilting, so we can begin applying \Cref{odd prime tiltings} to these modules). The figure below illustrates this for $p=3$.
\\
\\
The pattern here is similar to that for the case $p=2$, however we now have the extra complication that the values $a$ and $b$ from the theorem can be either $0$ or $1$ (whereas in the $p=2$ case, we had that $a = b = 0$), and the coefficients in the base $3$ expansion are in $\{0,1,2\}$. As such, we have that for a given $n\in \mathbb{N}$, those blocks where $(r,s) = (\hat{r},\hat{s})$ can be broken up into the union 

$$[3^n, 2\times 3^n - 1]\times[0, 3^n - 1]\cup[2\times 3^n, 3^{n+1}-1]\times[0, 3^n - 1]\cup [2\times 3^n, 3^{n+1}-1]\times[3^n, 2\times 3^n - 1],$$

in the case $r \geq s$. The result of this is that each interval $[3^n, 3^{n+1} - 1]$ is split into two at $2\times 3^n - 1$, and so each block $[0, 3^n - 1]\times[0, 3^n - 1]$ is split into nine distinct $3^{n-1}\times 3^{n-1}$ sections, with the three on the diagonal given inductively, as in the characteristic $2$ case.

\begin{center}
	\includegraphics[scale=0.75]{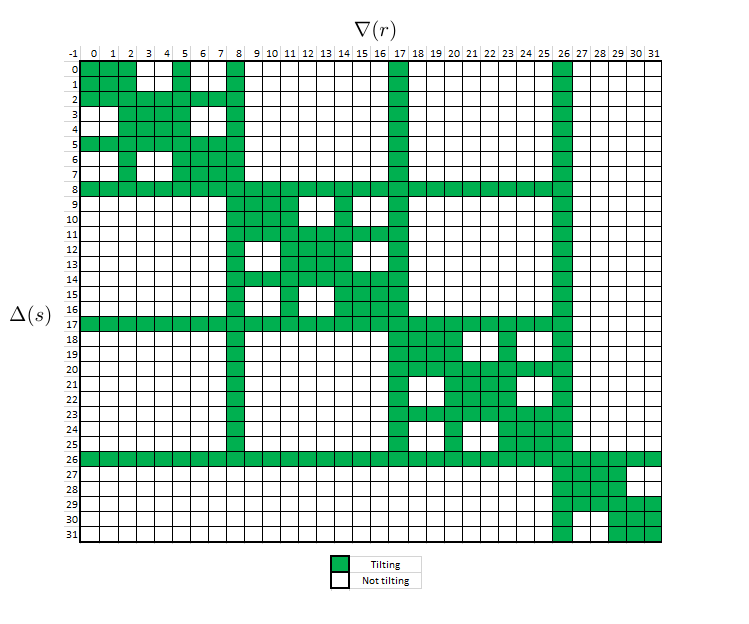}
	\captionof{figure}{The modules $\nabla(r)\otimes\Delta(s)$ when $\text{char}(k) = 3$.}
\end{center}

\subsection{Proof of \Cref{mainTheorem}}

We are now ready to prove \Cref{mainTheorem}, which we will do in two steps. The first is to show that for a primitive pair $(\hat{r},\hat{s})$, we have that $\nabla(\hat{r})\otimes\Delta(\hat{s})$ is a tilting module if and only if $\hat{r}$ and $\hat{s}$ are as described in the statement of the theorem. The second step is to show that for any pair $(r,s)$ with primitive pair $(\hat{r},\hat{s})$, we have that $\nabla(r)\otimes\Delta(s)$ is tilting if and only if $\nabla(\hat{r})\otimes\Delta(\hat{s})$ is tilting.
\\
\\
We first note that since the dual of a tilting module is also a tilting module, and we have the relation $(\nabla(r)\otimes\Delta(s))^{*} = \nabla(s)\otimes\Delta(r)$, it will be safe to assume that $r \geq s$, and simply take the dual for the case $r < s$.
\\
\\
1.) We will begin the first step by assuming that, for a primitive pair $(r,s)$, we have that $\nabla(r)\otimes\Delta(s)$ is tilting. We want to show that this forces $r$ and $s$ to be of the form in the statement of the theorem. We will assume that $r \geq s$ and proceed by induction on $\text{len}_p(r) = N$. For $N=0$ we have that $r \leq p-1$ and so $r = ap^N + p^N - 1$ for $a = 0,\ldots, p-2$, or in the case $r = p-1$ we have $r = p^{N+1} -  1$. In each case we have that $r$ is of the desired form, and $s < p^{N+1}$.
\\
\\
Next let's write $r = pt + r_0$ and $s = pu + s_0$, where $\text{len}_p(t) = \text{len}_p(r) - 1$, and similarly for $s$ and $u$. Now, if $\nabla(r)\otimes\Delta(s)$ is tilting, by \Cref{notTilting2} we must have that either $r_0$ or $s_0$ is equal to $p-1$, or $r$ and $s$ both lie in the set $\{ np-1, np, np + 1, \ldots, (n+1)p -1\}$. In the second case, since we are assuming the pair $(r,s)$ to be primitive, we must have that one of $r$ or $s$ is equal to $np - 1$, and the other is not. In any case then, we may assume then $r_0 = p-1$ so that $r = pt + p - 1$. 
\\
\\
Now we have two cases to consider, the first is that $s_0 = p-1$, and the second that $s_0 \neq p - 1$. Let's suppose that $s_0 = p -1$, then by \Cref{p-1} we have that $\nabla(t)\otimes\Delta(u)$ is tilting. By induction we must have that $t$ and $u$ are of the form given in the statement of the theorem. Without loss of generality, we may assume $t = ap^{N-1} + p^{N-1} - 1$ for some $a \in \{0,\ldots,p-2\}$, and $u \leq p^N-1$. Hence we have that 

$$ r = p(ap^{N-1} + p^{N-1} - 1) + p - 1 = ap^N + p^N - 1,$$

and $s \leq p^{N} - p + s_0$, which is strictly less than $p^{N+1}$ since $s_0 < p$.
\\
\\
For the second case, we suppose that $s_0 \neq p - 1$ (the case $r_0 \neq p-1$ and $s_0 = p-1$ is obtained in the same manner), so that by \Cref{odd prime tiltings} we have that $\nabla(t)\otimes\Delta(u)$  and $\nabla(t)\otimes\Delta(u-1)$ are tilting By induction we have that the pairs $(t, u)$ and $(t, u-1)$ are both of the form in the theorem. Since we cannot have that both $u$ and $u-1$ are of the form $ap^{N-1} + p^{N-1} - 1$, we must have that $t$ is of this form, and we complete the proof as above.
\\
\\
Now we prove the converse statement, that is, if $r = a p^n + p^n - 1$ for some $a \in \{0,\ldots, p-2\}$, $n \in \mathbb{N}$, and $s < p^{n+1}$, then $\nabla(r)\otimes\Delta(s)$ is tilting. Once again, we will use induction on $n$, with the case $n = 0$ being clear. For the inductive step, we have that if $r = pt + p-1$ and $s = pu + s_0$ then $t = ap^{n-1} + p^{n-1} - 1$ and $u < p^n$. Then by induction the modules $\nabla(t)\otimes\Delta(u)$ and $\nabla(t)\otimes\Delta(u-1)$ are tilting, so by either \Cref{p-1} or \Cref{odd prime tiltings} we have that $\nabla(r)\otimes\Delta(s)$ is tilting too.
\\
\\
2.) For the next step we prove the statement: $\nabla(r)\otimes\Delta(s)$ is tilting if and only if $\nabla(\hat{r})\otimes\Delta(\hat{s})$ is tilting. First, let's assume that $\nabla(r)\otimes\Delta(s)$ is tilting. By \Cref{notTilting2} we have that either one of $r$ and $s$ is congruent to $p-1$ modulo $p$ or they lie in the set $\{ np-1, np, np + 1, \ldots, (n+1)p -1\}$ for some $n\in \mathbb{N}$. Suppose that both $r$ and $s$ lie in the set $\{ np-1, np, np + 1, \ldots, (n+1)p -1\}$. If neither are equal to $np - 1$ then it's clear that $\hat{r}$ and $\hat{s}$ lie in the set $\{0,\ldots,p-1\}$, and so $\nabla(\hat{r})\otimes\Delta(\hat{s})$ is tilting. Note that  if we assume $\nabla(\hat{r})\otimes\Delta(\hat{s})$ is tilting, we must also have that either $r$ or $s$ is congruent to $p-1$ modulo $p$, as in step 1. We may then, only consider the case that at least one of $r$ and $s$ is congruent to $p-1$ modulo $p$.
\\
\\
Let's suppose that $r = pt + p-1$ and $s = pu + s_0$, so that by \Cref{hatsLemma} we have $\hat{r} = p\hat{t} + p - 1$ and $\hat{s} = p\hat{u} + s_0$. As in the previous step, there are two cases to consider: $s_0 = p-1$ and $s_0 \neq p-1$. In both cases we will proceed by induction on $\text{len}_p(r)$ with $r \geq s$.
\\
\\
First, we assume $s_0 = p-1$. For the base case $\text{len}_p(r) = 0$ we have that $r = s = p-1$ so $(\hat{r},\hat{s}) = (0,0)$, and the result is clear. Now by \Cref{p-1} we have $\nabla(r)\otimes\Delta(s)$ is tilting if and only if $\nabla(t)\otimes\Delta(u)$ is tilting. By induction then we have that this is tilting if and only if $\nabla(\hat{t})\otimes\Delta(\hat{u})$ is tilting, so applying \Cref{p-1} again (since $\hat{r} = p\hat{t} + p - 1$ and $\hat{s} = p\hat{u} + p-1$) we find that this is if and only if $\nabla(\hat{r})\otimes\Delta(\hat{s})$ is tilting.
\\
\\
Next, we assume that $s_0 \neq p-1$. Again, the base case is easily obtained since this time the pair $(p-1, s_0)$ is primitive. Now, for the inductive step we have, by \Cref{odd prime tiltings}, that $\nabla(r)\otimes\Delta(s)$ is tilting if and only if both $\nabla(t)\otimes\Delta(u)$ and $\nabla(t)\otimes\Delta(u-1)$ are tilting. By induction we have that these are tilting if and only if $\nabla(\hat{t})\otimes\Delta(\hat{u})$ and $\nabla(\hat{t})\otimes\Delta(\hat{u-1})$ are tilting. Now since $(\hat{t}, \hat{u-1}) = (\hat{t}, \hat{u} - 1)$ (\Cref{hatsLemma}), we apply \Cref{odd prime tiltings} again to obtain that this is if and only if $\nabla(\hat{r})\otimes\Delta(\hat{s})$ is tilting.\qed

\section*{Acknowledgements}

I wish to thank Stephen Donkin for his help and guidance given throughout this project, and acknowledge financial support from EPSRC.

\bibliography{tiltingbib}

\begin{thebibliography}{1}

\bibitem{Donkin}
S.~Donkin.
\newblock On tilting modules for algebraic groups.
\newblock {\em Mathematische Zeitschrift}, 212:39--60, 1993.

\bibitem{DonkinBundles}
S.~Donkin.
\newblock The cohomology of line bundles on the three-dimensional flag variety.
\newblock {\em Journal of Algebra}, 307:570--613, 2007.

\bibitem{Doty-Henke}
S.~Doty and A.~Henke.
\newblock Decomposition of tensor products of modular irreducibles for
  $\text{SL}_2$.
\newblock {\em The Quarterly Journal of Mathematics}, 56:189--207, 2005.

\bibitem{Humphreys-Jantzen}
J.E. Humphreys and J.C. Jantzen.
\newblock Blocks and indecomposable modules for semisimple algebraic groups.
\newblock {\em Journal of Algebra}, 54:494--503, 1978.

\bibitem{Jantzen1980}
J.C. Jantzen.
\newblock {D}arstellungen halbeinfacher {G}ruppen und ihrer
  {F}robenius-{K}erne.
\newblock {\em Journal für die reine und angewandte Mathematik}, 317:157--199,
  1980.

\bibitem{Jantzen}
J.C. Jantzen.
\newblock {\em Representations of Algebraic Groups}.
\newblock Academic Press, 1987.

\bibitem{Mathieu}
O.~Mathieu.
\newblock Filtrations of ${G}$-modules.
\newblock {\em Annales scientifiques de l'É.N.S.}, 23:625--644, 1990.

\bibitem{Parker}
A.~Parker.
\newblock Higher extensions between modules for $\text{SL}_2$.
\newblock {\em Advances in Mathematics}, 209:381--405, 2006.

\end{thebibliography}
\bibliographystyle{plain}

\end{document}